%% file: samuels.tex
\documentclass[11pt]{article}

\usepackage[margin=1in]{geometry}
\input{defs}

\title{A Cap-Move Reformulation of Ling's Proof of Samuels' Conjecture}
\author{Yanjun Han\thanks{New York University, \url{yanjunhan@nyu.edu}}}
\date{\today}

\begin{document}

\maketitle

\begin{abstract}
Let $0\le \mu_1\le \mu_2 \le \cdots \le \mu_n$ and $\delta > 0$. Samuels' conjecture claims that if $X_1,\dots,X_n$ are independent non-negative random variables with $\bE[X_i] = \mu_i$, then
\begin{align*}
	\bP\pth{ \sum_{i=1}^n X_i < \delta + \sum_{i=1}^n \mu_i } \ge \min_{1\le i\le n} \prod_{j=i}^n \pth{1-\frac{\mu_j}{\delta + \sum_{k=i}^n \mu_k}}. 
\end{align*}
This conjecture was recently proved by Ling \cite{Ling2026}. In this note, we provide an alternative presentation of Ling's proof via an operation called the \emph{cap move}. This proof works directly with finitely supported distributions and avoids the reduction to the Bernoulli case. 
\end{abstract}


\section{Introduction}
Let $X_1,\dots,X_n$ be independent non-negative random variables with $\bE[X_i] = \mu_i$, with $0\le \mu_1\le \mu_2 \le \cdots \le \mu_n$.  Samuels' conjecture \cite{Samuels1966,Samuels1968,Samuels1969} claims the following:  
\begin{conjecture}[Samuels' Conjecture]\label{conj:samuels}
	For every $\delta>0$, 
	\begin{align}\label{eq:Samuels}
		\bP\pth{ \sum_{i=1}^n X_i < \delta + \sum_{i=1}^n \mu_i } \ge \min_{1\le i\le n} \prod_{j=i}^n \pth{1-\frac{\mu_j}{\delta + \sum_{k=i}^n \mu_k}}. 
	\end{align}
\end{conjecture}
Samuels' conjecture implies several other conjectures such as Feige's \cite{feige2004,feige2006}, as well as a number of results in extremal combinatorics \cite{alon2012large,ferber2019uniformity}. 

Very recently Ling proved Samuels' conjecture \cite{Ling2026}. Ling's proof first reduces the problem to the Bernoulli case where each $X_i$ takes the form of $w_i\Bern(p_i)$, and then uses an induction argument by showing that a suitable coordinate can be removed one at a time. The current note provides a cap-move formulation of Ling's idea that works for general finitely supported distributions. 

\paragraph{AI disclosure.} This note benefits from discussions with GPT Sol 5.6 Pro, but is written entirely by the author. 

\section{Proof of Samuels' Conjecture}
By approximating a general random variable with discrete ones, it suffices to consider the case where each $X_i$ is a discrete random variable with finite support. In fact, a simple convexity argument can reduce the support size to $2$, but the subsequent proof does not require this stronger result. 

\subsection{The Cap Move}
A key ingredient of the proof is the following \emph{cap move}: pick some $i\in [n]$ and $a\ge 0$, replace $X_i$ by $X_i \wedge a := \min\{X_i, a\}$. Let $p$ be the LHS probability of \eqref{eq:Samuels}, and $p_i^a$ be the counterpart after applying the cap move. Since a cap move makes the sum $\sum_{i=1}^n X_i$ no larger, we have $p_i^a\ge p$. The following lemma controls the increase from $p$ to $p_i^a$.

\begin{lemma}[Cap Move]\label{lem:cap-move}
Let $S = \sum_{i=1}^n X_i$ with $\bE[S]<\lambda$, and $S_i^a$ be the counterpart after applying the cap move $X_i \mapsto X_i\wedge a$. Also define $p=\bP(S<\lambda), p_i^a=\bP(S_i^a<\lambda)$, and
\begin{align}
	b(S) &:= \bE[S] - \bE\qth{S\mid S < \lambda }, \label{eq:b} \\
	r(S) &:= \lambda - \bE\qth{S\mid S < \lambda }. \label{eq:r} 
\end{align}
Then $b(S)\ge b(S_i^a)$, and 
\begin{align*}
\frac{p_i^a}{p} \le \frac{r(S)}{r(S) - (b(S)-b(S_i^a))}. 
\end{align*}
\end{lemma}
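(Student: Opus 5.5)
The plan is to reduce both claims to a single inequality, $\Delta b \ge r(S)\bigl(1 - p/p_i^a\bigr)$ with $\Delta b := b(S)-b(S_i^a)$, and then prove it by a correlation (Chebyshev/Harris) argument in the single coordinate $X_i$. Write $T := S_i^a$ and $D := S - T = (X_i - a)_+ \ge 0$. Let $A = \{S<\lambda\}$ and $B=\{T<\lambda\}$, so that $A\subseteq B$.

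First, some preliminaries. Since $\bE[S]<\lambda$, we have $p=\bP(A)>0$, hence $p_i^a \ge p>0$, and $r(S) = \lambda - \bE[S\mid A] > 0$. Suppose we show
\begin{align*}
\Delta b \ge r(S)\pth{1-\frac{p}{p_i^a}}.
\end{align*}
Then $\Delta b\ge 0$, because $p\le p_i^a$. Moreover $r(S)-\Delta b \le r(S)\,p/p_i^a$, and the right-hand side is positive. Rearranging gives exactly $p_i^a/p \le r(S)/(r(S)-\Delta b)$. So everything reduces to this single inequality.

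Next, I expand the inequality. Using $\bE[T\mid B]\,p_i^a = \bE[T;A] + \bE[T;B\setminus A]$ and $\bE[T;A] = \bE[S;A]-\bE[D;A]$, I substitute $\Delta b = \bE[D] - \bE[S;A]/p + \bE[T;B]/p_i^a$. The term $\bE[S;A]/p$ cancels on both sides. Multiplying by $p_i^a$, the target becomes
\begin{align*}
p_i^a\,\bE[D] + \bE[T; B\setminus A] \ge \lambda\, \bP(B\setminus A) + \bE[D;A].
\end{align*}
On $B\setminus A$ we have $S\ge\lambda$, so $T = S-D \ge \lambda - D$. This gives $\bE[T;B\setminus A]\ge \lambda\bP(B\setminus A) - \bE[D;B\setminus A]$. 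Hence it suffices to prove
\begin{align*}
\bE[D\,\mathbf 1_B] \le \bE[D]\,\bP(B).
\end{align*}

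This last inequality is the main (and only non-algebraic) step. I would prove it by conditioning on $(X_j)_{j\ne i}$. Given these, $D=(X_i-a)_+$ is a nondecreasing function of $X_i$. On the other hand, $\mathbf 1_B = \mathbf 1\{X_i\wedge a + \sum_{j\ne i}X_j<\lambda\}$ is a nonincreasing function of $X_i$. By Chebyshev's association inequality for a single random variable, $\bE[D\mathbf 1_B\mid (X_j)_{j\ne i}] \le \bE[D\mid (X_j)_{j\neq i}]\,\bP(B\mid (X_j)_{j\ne i})$. By independence, $\bE[D\mid (X_j)_{j\neq i}] = \bE[D]$, a constant. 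Taking expectations then yields $\bE[D\mathbf 1_B]\le \bE[D]\,p_i^a$, which completes the argument. Finite support of the $X_j$ keeps all conditional expectations elementary.
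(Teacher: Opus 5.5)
Your proof is correct and is essentially the paper's argument with different bookkeeping. Your target $\Delta b\ge r(S)(1-p/p_i^a)$ is exactly what the paper's rearrangement produces. Your bound $T\ge\lambda-D$ on $B\setminus A$ is the same pointwise inequality $(\lambda-T)_+-(\lambda-S)_+\le D\mathbf{1}_B$ that the paper integrates. Your Chebyshev step can also be replaced by the exact identity $\bE[D\mathbf{1}_B]=\bE[D]\,\bP(S_{-i}+a<\lambda)$ with $S_{-i}=\sum_{j\ne i}X_j$, because on $\{X_i>a\}$ the event $B$ equals $\{S_{-i}+a<\lambda\}$.

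One loose end is the final inversion. Positivity of $r(S)p/p_i^a$ does not justify inverting $r(S)-\Delta b\le r(S)p/p_i^a$, since that needs $r(S)-\Delta b>0$ itself. This does hold: $r(S)-\Delta b=\lambda-\bE[S]+b(S_i^a)$, and $b(S_i^a)\ge 0$ because conditioning on $\{S_i^a<\lambda\}$ can only lower the mean.
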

\begin{proof}
Write $S_{-i} = \sum_{j\neq i} X_j$, which is independent of $X_i$. Then
\begin{align*}
	\bE[(\lambda - S_i^a)_+ ] -	\bE[(\lambda - S)_+ ] &= \bE[\indc{X_i>a} [(\lambda - S_{-i} - a)_+ - (\lambda - S_{-i} - X_i)_+] ] \\
	&\le \bE[\indc{X_i>a} (X_i-a) \indc{\lambda - S_{-i} - a> 0} ]  \\
	&= \bE[\indc{X_i>a} (X_i-a)] \cdot \bP(S_{-i}+a < \lambda) \\
	&\le \bE[(X_i-a)_+] \cdot p_i^a. 
\end{align*}
Since $\bE[(\lambda - S)_+ ]  = pr(S)$, $\bE[(\lambda - S_i^a)_+ ]  = p_i^a r(S_i^a)$, and
\begin{align}\label{eq:identity}
	r(S) - r(S_i^a) = b(S) - b(S_i^a) - \bE[(X_i-a)_+],
\end{align}
rearranging gives the target inequality. The claim follows from this inequality and $p_i^a\ge p$. 
\end{proof}
By \Cref{lem:cap-move}, for a cap move that does not increase $p$ by too much, we want a large $r(S)$ and a small $b(S) - b(S_i^a)$. This motivates the following notion of \emph{safe move}. 

\begin{definition}[Safe Move]\label{defn:safe}
	We call a cap move \emph{safe} if $r(S_i^a) \ge r(S)$. 
\end{definition}
Before we proceed, we comment on the consequences of a safe move. First, by definition, the quantity $r(S)$ is non-decreasing under safe moves. Second, by \eqref{eq:identity}, a safe move implies $b(S)-b(S_i^a)\le \bE[(X_i-a)_+] = \bE[X_i] - \bE[X_i\wedge a]$; therefore, after any finite sequence of safe moves, the sum of such terms with index $i$ is at most $\bE[X_i] = \mu_i$. Finally, over a finite sequence of safe moves, the sum of all $b$-decrements is at most $b(S)$. These properties turn out to be crucial in the final proof. 

\subsection{Existence of Safe Moves}
The second step is a key claim in the proof: unless all $X_i$'s are deterministic, a non-trivial safe move always exists. This follows from the following lemma. 

\begin{lemma}\label{lem:covariance}
Assume the conditions and notations of \Cref{lem:cap-move}. For each $i\in [n]$, there exists a nonnegative measure $\nu_i$ on $\bR_+$ such that
\begin{align*}
	\mathsf{Cov}(S, X_i \mid S<\lambda) = \int_0^\infty (r(S_i^a)-r(S))\nu_i(da). 
\end{align*}
In addition, if $\mathrm{supp}(X_i) = \{x_1,\dots,x_m\}$ with $x_1<\cdots<x_m$, then $\mathrm{supp}(\nu_i) = \{x_1,\dots,x_{m-1}\}$.
\end{lemma}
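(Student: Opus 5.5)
The plan is to reduce everything to the values of $X_i$ on its finite support and write both sides as linear combinations of the same finitely many quantities. Write $A=\{S<\lambda\}$, $r=r(S)$, and $Z:=(\lambda-S)_+-r\,\indc{S<\lambda}$, so that $\bE[Z]=0$ by the identity $\bE[(\lambda-S)_+]=p\,r(S)$ used in the proof of \Cref{lem:cap-move}. Since $Z=(\lambda-S-r)\indc{A}$, we have
\begin{align*}
p\,\mathsf{Cov}(S,X_i\mid A)=-\bE[Z X_i].
\end{align*}
Set $d_k=x_{k+1}-x_k>0$, and use the layer-cake decomposition
\begin{align*}
X_i=x_m-\sum_{k=1}^{m-1} d_k \indc{X_i\le x_k}.
\end{align*}
The constant $x_m$ contributes nothing because $\bE[Z]=0$. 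Hence
\begin{align*}
p\,\mathsf{Cov}(S,X_i\mid A)=\sum_{k=1}^{m-1} d_k\,\psi_k, \qquad \psi_k:=\bE[Z\indc{X_i\le x_k}],
\end{align*}
with the convention $\psi_0=0$.

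Next I encode the cap moves at the support points. For $a=x_k$, let $g_k=\bP(S_i^{x_k}<\lambda)$ and $\phi_k:=g_k\,(r(S_i^{x_k})-r)=\bE[(\lambda-S_i^{x_k})_+]-r\,g_k$. On $\{X_i\le x_k\}$ the capped sum equals $S$; on $\{X_i>x_k\}$ it equals $S_{-i}+x_k$. Using independence of $X_i$ and $S_{-i}$, set $h(t):=\bE[(\lambda-S_{-i}-t)_+-r\indc{S_{-i}+t<\lambda}]$, $q_k=\bP(X_i>x_k)$ and $\pi_k=\bP(X_i=x_k)>0$. Then
\begin{align*}
\phi_k=\psi_k+q_k h(x_k).
\end{align*}
The same independence gives $\psi_k-\psi_{k-1}=\pi_k h(x_k)$. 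Eliminating $h(x_k)$ yields a triangular linear relation
\begin{align*}
\phi_k=\Bigl(1+\frac{q_k}{\pi_k}\Bigr)\psi_k-\frac{q_k}{\pi_k}\psi_{k-1}, \qquad k=1,\dots,m-1.
\end{align*}

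The main step is to invert this relation with nonnegative coefficients. I seek $c_1,\dots,c_{m-1}$ with $\sum_k d_k\psi_k=\sum_k c_k\phi_k$ identically in $\psi$. Matching the coefficients of $\psi_k$ gives $c_{m-1}(1+q_{m-1}/\pi_{m-1})=d_{m-1}$ and, for $k<m-1$,
\begin{align*}
c_k\Bigl(1+\frac{q_k}{\pi_k}\Bigr)=d_k+c_{k+1}\frac{q_{k+1}}{\pi_{k+1}}.
\end{align*}
The coefficient of $\psi_0$ is irrelevant since $\psi_0=0$. Backward induction then shows every $c_k>0$, because $d_k>0$ and the other term added on the right is nonnegative. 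This sign check is the point I expect to need care: one must verify that the triangular structure really puts the cross term on the same side as $d_k$, rather than producing a subtraction.

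Finally I define
\begin{align*}
\nu_i:=\sum_{k=1}^{m-1}\frac{c_k\,g_k}{p}\,\delta_{x_k}.
\end{align*}
Then
\begin{align*}
\int (r(S_i^a)-r)\,\nu_i(da)=\frac1p\sum_k c_k\phi_k=\mathsf{Cov}(S,X_i\mid A).
\end{align*}
Each weight is strictly positive, since $c_k>0$ and $g_k\ge p>0$ (the latter because a cap move only increases the probability, and $p>0$ from $\bE[S]<\lambda$). Hence $\mathrm{supp}(\nu_i)=\{x_1,\dots,x_{m-1}\}$ exactly. If $m=1$, both sides vanish with $\nu_i=0$. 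The remaining verifications, namely the expansion of $\phi_k$ and the difference formula for $\psi_k$, are routine conditioning on $X_i$.
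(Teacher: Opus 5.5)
Your proof is correct, and it takes a different, purely algebraic route to the lemma. The paper obtains $\mathsf{Cov}(S,X_i\mid S<\lambda)$ as the derivative at $\theta=0$ of $r$ under the exponential tilt of $X_i$. It then writes the tilt direction $(m_i-x)P_i(dx)$ as the nonnegative mixture \eqref{eq:convex-hull} of cap-move directions $P_i^a-P_i$, with $\nu_{i,0}=dw_i$ and $w_i(t)=\bE[X_i-m_i\mid X_i>t]$. Finally, it uses that $\bE[(\lambda-S)_+]$ and $\bP(S<\lambda)$ are linear in $P_i$, so the directional derivative along $P_i^a-P_i$ is exactly $\frac{\bP(S_i^a<\lambda)}{p}(r(S_i^a)-r(S))$.

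You skip the calculus. The centered variable $Z$ and the identity $p\,\mathsf{Cov}(S,X_i\mid S<\lambda)=-\bE[ZX_i]$ replace the derivative. The layer-cake expansion together with the triangular relation between $\psi_k$ and $\phi_k$ is that same decomposition written in coordinates. In fact your $c_k$ are exactly the paper's atoms $\nu_{i,0}(\{x_k\})$. Your relations hold with $h$ replaced by an arbitrary function, so your identity amounts to the measure identity $(m_i-x)P_i(dx)=\sum_k c_k(P_i^{x_k}-P_i)(dx)$. Integrating that over $[0,x_k]$ gives $\sum_{j\le k}c_j=w_i(x_k)$.

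What your version buys:
\begin{itemize}
\item It avoids differentiating under the expectation, which the paper delegates to standard properties of exponential tilts.
\item The backward recursion certifies strict positivity of every weight directly. Your worry about the sign was unfounded: the cross term $c_{k+1}q_{k+1}/\pi_{k+1}$ does land on the same side as $d_k$.
\end{itemize}

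What the paper's version buys:
\begin{itemize}
\item It gives the weights in closed form through the monotone function $w_i$.
\item It extends as a Stieltjes measure beyond finite support.
\item It explains conceptually why the conditional covariance is a positive combination of cap-move increments.
\end{itemize}

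Your argument does use finite support of $X_i$ essentially, since you divide by $\pi_k$. The paper reduces to that case at the outset, so nothing is lost for \Cref{cor:existence}.
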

\begin{proof}
A simple proof uses an exponential tilt. For $\theta\ge 0$, let $P_i$ be the distribution of $X_i$, and $P_{i,\theta}(dx_i)\propto e^{-\theta x_i}P_i(dx_i)$ be its exponential tilt. Also write $S_{i,\theta}$ be the counterpart of $S$ when only $P_i$ is replaced by $P_{i,\theta}$. By standard properties of exponential tilts, 
\begin{align*}
	\frac{d}{d\theta} r(S_{i,\theta})\Big|_{\theta =0} &= \mathsf{Cov}(S, X_i \mid S<\lambda) , \\
	\frac{d}{d\theta} P_{i,\theta}(dx)\Big|_{\theta =0}  &= (m_i-x) P_i(dx), 
\end{align*}
with $m_i:=\bE[X_i]$. To relate the LHS quantities, note that
\begin{align*}
	&r(S) =\lambda - \bE[S\mid S<\lambda] = \frac{\bE[(\lambda-S)_+]}{\bE[\indc{S<\lambda}]} \\
	&\Longrightarrow \frac{d}{d\theta} r(S_{i,\theta})\Big|_{\theta =0} = \int	\frac{d}{d\theta} P_{i,\theta}(dx_i)\Big|_{\theta =0}\cdot \frac{\bE[(\lambda-S_{-i}-x_i)_+] - r(S) \bP(S_{-i}+x_i < \lambda) }{\bP(S<\lambda)}. 
\end{align*}

We next show that there exists a measure $\nu_{i,0}$ such that
\begin{align}\label{eq:convex-hull}
	 (m_i-x) P_i(dx) = \int_0^\infty  \nu_{i,0}(da)(P_i^a - P_i)(dx), 
\end{align}
where $P_i^a$ is the distribution of $X_i\wedge a$. To find $\nu_{i,0}$,  we integrate both sides on $[0,t]$ for $t\ge 0$: 
\begin{align*}
	 \bE[(X_i - m_i)\indc{X_i > t}] &= \int_{(t,\infty)} (x-m_i)P_i(dx) \\
	 &=  \int_{[0,t]} (m_i-x)P_i(dx) \\
	 & = \int_0^\infty \nu_{i,0}(da)(P_i^a - P_i)([0,t]) \\
	 &= \int_0^\infty \nu_{i,0}(da) \bP(X_i> t)\indc{a\le t} =\bP(X_i>t) \nu_{i,0}([0,t]) . 
\end{align*}
Therefore, we can construct $\nu_{i,0}(dt)=dw_i(t)$, with $w_i(t) := \bE[X_i - m_i \mid X_i > t]$. Clearly $w_i(0^-)=0$ and $t\mapsto w_i(t)$ is non-decreasing, so $\nu_{i,0}$ is a nonnegative Stieltjes measure. Using \eqref{eq:convex-hull} and the above displays, we have
\begin{align*}
	 \mathsf{Cov}(S, X_i \mid S<\lambda) = \int_0^\infty \nu_{i,0}(da) \frac{\bP(S_i^a<\lambda)}{\bP(S<\lambda)}(r(S_i^a) - r(S)), 
\end{align*}
and absorbing the (strictly positive) ratio into $\nu_{i,0}$ completes the proof of the identity. The support follows from $\mathrm{supp}(\nu_i) = \mathrm{supp}(\nu_{i,0}) =  \{x_1,\dots,x_{m-1}\}$ and the jump structure of the Stieltjes measure $dw_i(t)$. 
\end{proof}

The following corollary is then immediate. 
\begin{corollary}[Existence of a safe move]\label{cor:existence}
Let $X_1,\dots,X_n$ be discrete with finite support, and not all deterministic. Then there exist $i\in [n]$ and $a\in \mathrm{supp}(X_i), 0\le a<\|X_i\|_\infty$ such that the cap move $X_i \mapsto X_i\wedge a$ is safe. 
\end{corollary}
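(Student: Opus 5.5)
The plan is to sum the identity of \Cref{lem:covariance} over $i\in[n]$. Since $S=\sum_i X_i$, bilinearity of the conditional covariance gives
\begin{align*}
\sum_{i=1}^n \mathsf{Cov}(S, X_i\mid S<\lambda) = \mathsf{Var}(S\mid S<\lambda) \ge 0,
\end{align*}
so
\begin{align*}
\sum_{i=1}^n \int_0^\infty \big(r(S_i^a)-r(S)\big)\,\nu_i(da) \ge 0 .
\end{align*}
If the integrand were strictly negative at every atom of every $\nu_i$, the sum would be strictly negative, provided at least one $\nu_i$ is nonzero. This contradiction yields some $i$ and some atom $a$ of $\nu_i$ with $r(S_i^a)\ge r(S)$, which is exactly a safe move in the sense of \Cref{defn:safe}.

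The remaining steps are bookkeeping about supports. First, $X_i$ is non-deterministic if and only if its support has $m\ge 2$ points. By the support statement in \Cref{lem:covariance}, this holds if and only if $\nu_i\neq 0$, with atoms exactly $x_1,\dots,x_{m-1}$. Because $\nu_i$ is a finite sum of point masses with strictly positive weights, the integral is a positive combination of the values $r(S_i^{x_k})-r(S)$ for $k\le m-1$. So the argument above produces $i$ with $X_i$ non-deterministic and $a=x_k$ for some $k\le m-1$. Such an $a$ lies in $\mathrm{supp}(X_i)$, satisfies $a\ge x_1\ge 0$, and satisfies $a<x_m=\|X_i\|_\infty$. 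These are exactly the required conditions. Deterministic coordinates have $\nu_i=0$ and contribute nothing, so they cause no trouble.

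The one point to handle with care is the standing hypothesis of \Cref{lem:cap-move} that is used implicitly, namely that $\bP(S<\lambda)>0$ so that $r$ and the conditional covariance are defined. This is automatic from $\bE[S]<\lambda$, which I will assume as in \Cref{lem:covariance}. Also, $r(S_i^a)$ is well defined because $p_i^a\ge p>0$. I expect no real obstacle beyond this. The main conceptual step is recognising that the covariances sum to a conditional variance, whose nonnegativity forces a nonnegative integrand somewhere on a nonzero measure. Note that we only need $\ge 0$, not strict positivity, since safety is a non-strict inequality.
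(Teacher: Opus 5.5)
Your proposal is correct and follows the paper's proof essentially verbatim: sum the identity of \Cref{lem:covariance} over $i$ to get $\var(S\mid S<\lambda)\ge 0$, extract an atom $a$ of some $\nu_i$ with $r(S_i^a)\ge r(S)$, and use the support statement to conclude $a\in\{x_1,\dots,x_{m-1}\}\subseteq\mathrm{supp}(X_i)$ with $0\le a<\|X_i\|_\infty$. You also make explicit that some $\nu_i$ must be nonzero because not all $X_i$ are deterministic. The paper leaves this step implicit, but it is needed, since otherwise the sum is trivially $0$ and no atom is produced.
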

\begin{proof}
Summing \Cref{lem:covariance} over $i\in [n]$ gives
\begin{align*}
	0\le \var(S \mid S<\lambda) = \sum_{i=1}^n \int_0^\infty (r(S_i^a)-r(S))\nu_i(da). 
\end{align*}
As a result, there exist $i\in [n]$ and $a\in \mathrm{supp}(\nu_i)$ with $r(S_i^a)\ge r(S)$, i.e. the cap move $X_i \mapsto X_i\wedge a$ is safe. By the second statement of \Cref{lem:covariance}, we have $a\in \mathrm{supp}(X_i)$ and $0\le a<\|X_i\|_\infty$. 
\end{proof}

\subsection{Variational representation of Samuels' lower bound}
The final ingredient is a variational representation of Samuels' lower bound. 

\begin{lemma}[Variational representation]\label{lem:variational}
	For $0\le \mu_1\le \cdots \le \mu_n$ and $\delta>0$, it holds that
	\begin{align*}
		\min_{b\ge 0} \min_{\substack{z_i\in [0,\mu_i], i\in [n] \\ z_1+\cdots+z_n\le b}} \sum_{i=1}^n \log\pth{1-\frac{z_i}{\delta+b}} = \min_{1\le i\le n} \sum_{j=i}^n \log\pth{1-\frac{\mu_j}{\delta + \sum_{k=i}^n \mu_k}}. 
	\end{align*}
\end{lemma}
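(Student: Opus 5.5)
The plan is to prove the two inequalities separately. Write $F(b,z):=\sum_{i=1}^n \log(1-\frac{z_i}{\delta+b})$ for the objective on the left-hand side. Note that $b\ge z_1+\cdots+z_n\ge z_i$, so every term is well defined.

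The direction ``LHS $\le$ RHS'' is immediate. For each $i$, take $b=\sum_{k=i}^n\mu_k$, set $z_j=\mu_j$ for $j\ge i$ and $z_j=0$ for $j<i$. This is feasible and gives $F$ equal to the $i$-th term on the right.

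For ``LHS $\ge$ RHS'', fix $b$ and analyze the inner minimization. The map $z\mapsto \log(1-z/(\delta+b))$ is concave and decreasing on $[0,\delta+b)$. Hence the inner minimum over the polytope $\{z_i\in[0,\mu_i],\ \sum z_i\le b\}$ is attained at a vertex with $\sum z_i=b$, whenever $b\le\sum\mu_i$. (If $b>\sum\mu_i$, we simply take $z=\mu$, and increasing $b$ further only increases $F$, so we may cap $b$ at $\sum\mu_i$.) A vertex has all coordinates at $0$ or $\mu_i$ except at most one fractional coordinate. A concave exchange argument then shows that mass is best concentrated on the largest $\mu_j$'s. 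Indeed, for $z_j<\mu_j$ and $z_k>0$ with $\mu_j\ge\mu_k$, moving mass so as to make the vector more ``spread out'' decreases the concave sum, by Karamata's inequality. So the optimum is $z=(0,\dots,0,t,\mu_{i+1},\dots,\mu_n)$ with $t\in[0,\mu_i]$ and $b=t+\sum_{k>i}\mu_k$.

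It then remains to minimize the one-variable function
\[
g(t)=\log\Bigl(1-\frac{t}{\delta+t+M}\Bigr)+\sum_{j>i}\log\Bigl(1-\frac{\mu_j}{\delta+t+M}\Bigr),\qquad M=\sum_{k>i}\mu_k,
\]
over $t\in[0,\mu_i]$, and to show that its minimum is attained at an endpoint. The endpoints give exactly the $(i+1)$-th and $i$-th terms on the right, so this finishes the proof. Rewrite
\[
g(t)=\log\frac{\delta+M}{\delta+t+M}+\sum_{j>i}\log\Bigl(1-\frac{\mu_j}{\delta+t+M}\Bigr).
\]
With $u=\delta+t+M$, this is $\log(\delta+M)-\log u+\sum_{j>i}\log(1-\mu_j/u)$, which equals $\log(\delta+M)+\sum_{j>i}\log(u-\mu_j)-(n-i+1)\log u$. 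As a function of $u$, the sum $\sum\log(u-\mu_j)$ is concave and $-\log u$ is convex, so concavity is not automatic. This endpoint step is the main obstacle.

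I would try to show that $g'$ changes sign at most once, from negative to positive, or else that $g$ is quasi-concave. Here $u\,g'(u)=\sum_{j>i}\frac{u}{u-\mu_j}-(n-i+1)$, which is decreasing in $u$. So $g'$ can only go from positive to negative, which makes $g$ unimodal with an interior maximum, and the minimum is indeed at an endpoint. If that computation fails, the fallback is to use monotonicity $\mu_i\le\mu_j$ to compare the endpoint values directly.
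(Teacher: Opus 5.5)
Your proposal is correct and is essentially the paper's proof. For $b\in[M_{i+1},M_i]$ with $M_k=\sum_{j\ge k}\mu_j$, the inner minimizer fills the largest $\mu_j$'s; your Karamata step is cleanest stated as the fact that $(0,\dots,0,t,\mu_{i+1},\dots,\mu_n)$ majorizes every feasible $z$ with $\sum_j z_j=b$, whereas the paper simply cites concavity and monotonicity. Your $u\,g'(u)=-1+\sum_{j>i}\mu_j/(u-\mu_j)$ with $u=\delta+b$ is exactly the bracket in the paper's $G'(b)$, and the step you flagged as the main obstacle goes through as written: $u\ge\delta+M>\mu_j$ makes each $u/(u-\mu_j)$ non-increasing, so $g$ is quasi-concave with its minimum at an endpoint, and no fallback is needed (for $i=n$ the endpoint $t=0$ gives the value $0$, which is harmlessly at least the right-hand side).
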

\begin{proof}
Let $M_k = \sum_{i=k}^n \mu_i$ for $k\in [n]$, and $M_{n+1} := 0$. For a fixed $b\in [M_{k+1},M_{k}]$ with $k\ge 1$, since $z\mapsto \log(1-\frac{z}{\delta+b})$ is decreasing and concave, an inner minimizer is $z_i = \mu_i$ for $i\ge k+1$, $z_i = 0$ for $i<k$, and $z_k = b-M_{k+1}$. Therefore, for $b\in [M_{k+1},M_{k}]$, 
\begin{align*}
	G(b) := \min_{\substack{z_i\in [0,\mu_i], i\in [n] \\ z_1+\cdots+z_n\le b}} \sum_{i=1}^n \log\pth{1-\frac{z_i}{\delta+b}}  = \log\pth{1-\frac{b-M_{k+1}}{\delta + b}}+\sum_{i=k+1}^n \log\pth{1-\frac{\mu_i}{\delta+b}}. 
\end{align*}
On the current interval, 
\begin{align*}
	G'(b) = \frac{1}{\delta+b}\qth{-1+\sum_{i=k+1}^n \frac{\mu_i}{\delta+b-\mu_i}},
\end{align*}
and the term in the bracket is decreasing in $b$. Therefore, $G'(b)$ can change sign at most once, and only from positive to negative; this shows that $G$ is quasi-concave and $G(b)\ge G(M_k)\wedge G(M_{k+1})$ on $[M_{k+1},M_{k}]$. This implies
\begin{align*}
	\min_{0\le b\le M_1} G(b) = \min_{1\le i\le n} G(M_i) = \min_{1\le i\le n} \sum_{j=i}^n \log\pth{1-\frac{\mu_j}{\delta + \sum_{k=i}^n \mu_k}}.
\end{align*}
The case $b\ge M_1$ is obvious: the sum constraint is inactive, and $G(b)$ is increasing in $b\ge M_1$. 
\end{proof}

\subsection{Completing the proof}
We are ready to complete the proof of \Cref{conj:samuels}. Start from $S_0 := S = \sum_{i=1}^n X_i$, and assume that all $X_i$'s are discrete with finite support. By \Cref{cor:existence}, as long as $X_i$'s are not all deterministic, there exists a safe move $X_i\mapsto X_i\wedge a$, and repeating such operations gives a sequence $S_0, S_1, S_2, \dots$. By \Cref{cor:existence}, each cap move will remove one support from $X_i$, so the  finite support assumption implies that these are only finitely many safe moves, and the final sum $S_m$ is deterministic. Set $\lambda=\delta+\sum_{i=1}^n \mu_i$ and let $p_t = \bP(S_t<\lambda)$; since $\bE[S_t]$ is non-increasing, we have $S_m = \bE[S_m] \le \sum_{i=1}^n \mu_i < \lambda$, so $p_m = 1$. Also, the definition of safe moves implies $r(S_0)\le r(S_1)\le \cdots\le r(S_m)$, so \Cref{lem:cap-move} gives
\begin{align*}
	\frac{p_{t}}{p_{t-1}} \le \frac{r(S_0)}{r(S_0) - (b(S_{t-1})-b(S_t))}. 
\end{align*}
Let $z_i = \sum_{t\in [m]: i_t = i} (b(S_{t-1})-b(S_t))$, with $i_t$ being the coordinate capped at time $t$, the discussion below \Cref{defn:safe} gives
\begin{align*}
	0\le z_i\le \mu_i, \qquad \sum_{i=1}^n z_i \le b(S_0). 
\end{align*}
Finally, a telescoping gives
\begin{align*}
	\log \bP(S_0<\lambda) &\ge \log \bP(S_m<\lambda) + \sum_{i=1}^n \sum_{t\in [m]: i_t=i}\log\pth{1-\frac{b(S_{t-1})-b(S_t)}{r(S_0)}}\\
	&\ge \sum_{i=1}^n \log\pth{1-\frac{z_i}{r(S_0)}} = \sum_{i=1}^n \log\pth{1-\frac{z_i}{\delta + b(S_0)}}   \\
	&\ge 	\min_{b\ge 0} \min_{\substack{z_i\in [0,\mu_i], i\in [n] \\ z_1+\cdots+z_n\le b}} \sum_{i=1}^n \log\pth{1-\frac{z_i}{\delta+b}}. 
\end{align*}
The variational representation in \Cref{lem:variational} then proves Samuels' conjecture. 

\bibliographystyle{alpha}
\bibliography{refs}

\end{document}

%% file: defs.tex
\usepackage{bbm}
\usepackage{graphicx}
\usepackage{amsmath,amssymb,amsthm,amsfonts}

\usepackage{paralist}
\usepackage{bm}
\usepackage{xspace}
\usepackage{url}
\usepackage{prettyref}
\usepackage{boxedminipage}
\usepackage{wrapfig}
\usepackage{color}
\usepackage{xspace}

\usepackage{amsmath,amsthm,amsfonts,amssymb}
\usepackage{graphicx}

\usepackage{nicefrac}

\newtheorem*{definition*}{Definition}

\usepackage{subcaption}

\usepackage[utf8]{inputenc}

\usepackage{xcolor}
\definecolor{expert}{HTML}{008000}
\definecolor{error}{HTML}{f96565}

\usepackage{color-edits}
\addauthor{sw}{blue}

\usepackage{thmtools}
\usepackage{thm-restate}

\usepackage{tikz}
\usetikzlibrary{arrows,calc} 
\newcommand{\tikzAngleOfLine}{\tikz@AngleOfLine}
\def\tikz@AngleOfLine(#1)(#2)#3{%
\pgfmathanglebetweenpoints{%
\pgfpointanchor{#1}{center}}{%
\pgfpointanchor{#2}{center}}
\pgfmathsetmacro{#3}{\pgfmathresult}%
}

\declaretheoremstyle[
    headfont=\normalfont\bfseries, 
    bodyfont = \normalfont\itshape]{mystyle} 

\usepackage{listings}
\usepackage{amsmath}
\usepackage{amsthm}
\usepackage{tikz}
\usepackage{caption}
\usepackage{mdwmath}
\usepackage{multirow}
\usepackage{mdwtab}
\usepackage{eqparbox}
\usepackage{multicol}
\usepackage{amsfonts}
\usepackage{tikz}
\usepackage{multirow,bigstrut,threeparttable}
\usepackage{amsthm}
\usepackage{bbm}
\usepackage{epstopdf}
\usepackage{mdwmath}
\usepackage{mdwtab}
\usepackage{eqparbox}
\usetikzlibrary{topaths,calc}
\usepackage{latexsym}
\usepackage{amssymb}
\usepackage{bm}
\usepackage{amssymb}
\usepackage{graphicx}
\usepackage{mathrsfs}
\usepackage{epsfig}
\usepackage{psfrag}
\usepackage[
            CJKbookmarks=true,
            bookmarksnumbered=true,
            bookmarksopen=true,
            colorlinks=true,
            citecolor=red,
            linkcolor=blue,
            anchorcolor=red,
            urlcolor=blue
            ]{hyperref}

\usepackage{comment}
\usepackage{mathtools}
\usepackage{blkarray}
\usepackage{multirow,bigdelim,dcolumn,booktabs}

\usepackage{xparse}
\usepackage{tikz}
\usetikzlibrary{calc}
\usetikzlibrary{decorations.pathreplacing,matrix,positioning}

\usepackage[T1]{fontenc}
\usepackage[utf8]{inputenc}
\usepackage{mathtools}
\usepackage{blkarray, bigstrut}
\usepackage{gauss}

\newcommand*{\BraceAmplitude}{0.4em}%
\newcommand*{\VerticalOffset}{0.5ex}%
\newcommand*{\HorizontalOffset}{0.0em}%
\newcommand*{\blocktextwid}{3.0cm}%
\NewDocumentCommand{\InsertLeftBrace}{%
	O{} 
	O{\HorizontalOffset,\VerticalOffset} 
	O{\blocktextwid} 
	m   
	m   
	m   
}{%
	\begin{tikzpicture}[overlay,remember picture]
	\coordinate (Brace Top)    at ($(#4.north) + (#2)$);
	\coordinate (Brace Bottom) at ($(#5.south) + (#2)$);
	\draw [decoration={brace, amplitude=\BraceAmplitude}, decorate, thick, draw=black, #1]
	(Brace Bottom) -- (Brace Top) 
	node [pos=0.5, anchor=east, align=left, text width=#3, color=black, xshift=\BraceAmplitude] {#6};
	\end{tikzpicture}%
}%
\NewDocumentCommand{\InsertRightBrace}{%
	O{} 
	O{\HorizontalOffset,\VerticalOffset} 
	O{\blocktextwid} 
	m   
	m   
	m   
}{%
	\begin{tikzpicture}[overlay,remember picture]
	\coordinate (Brace Top)    at ($(#4.north) + (#2)$);
	\coordinate (Brace Bottom) at ($(#5.south) + (#2)$);
	\draw [decoration={brace, amplitude=\BraceAmplitude}, decorate, thick, draw=black, #1]
	(Brace Top) -- (Brace Bottom) 
	node [pos=0.5, anchor=west, align=left, text width=#3, color=black, xshift=\BraceAmplitude] {#6};
	\end{tikzpicture}%
}%
\NewDocumentCommand{\InsertTopBrace}{%
	O{} 
	O{\HorizontalOffset,\VerticalOffset} 
	O{\blocktextwid} 
	m   
	m   
	m   
}{%
	\begin{tikzpicture}[overlay,remember picture]
	\coordinate (Brace Top)    at ($(#4.west) + (#2)$);
	\coordinate (Brace Bottom) at ($(#5.east) + (#2)$);
	\draw [decoration={brace, amplitude=\BraceAmplitude}, decorate, thick, draw=black, #1]
	(Brace Top) -- (Brace Bottom) 
	node [pos=0.5, anchor=south, align=left, text width=#3, color=black, xshift=\BraceAmplitude] {#6};
	\end{tikzpicture}%
}%

\usetikzlibrary{patterns}

\definecolor{cof}{RGB}{219,144,71}
\definecolor{pur}{RGB}{186,146,162}
\definecolor{greeo}{RGB}{91,173,69}
\definecolor{greet}{RGB}{52,111,72}

\theoremstyle{plain}
\newtheorem{thm}{Theorem}[section]

\newtheorem{lemma}[thm]{Lemma}

\newtheorem{corollary}[thm]{Corollary}
\newtheorem{definition}[thm]{Definition}
\newtheorem{conjecture}{Conjecture}

\def \bP {\mathbb{P}}

\def \bE {\mathbb{E}}
\def \bR {\mathbb{R}}

\def \var {\mathrm{Var}}
\def\1{\mathbbm{1}}

\usepackage{xspace}

\newcommand{\pth}[1]{\left( #1 \right)}
\newcommand{\qth}[1]{\left[ #1 \right]}

\newcommand{\Bern}{\text{\rm Bern}}

\newcommand{\indc}[1]{{\mathbf{1}_{\left\{{#1}\right\}}}}

\definecolor{myblue}{rgb}{.8, .8, 1}
\definecolor{mathblue}{rgb}{0.2472, 0.24, 0.6} 
\definecolor{mathred}{rgb}{0.6, 0.24, 0.442893}
\definecolor{mathyellow}{rgb}{0.6, 0.547014, 0.24}

\usepackage{cleveref}
\crefname{lemma}{Lemma}{Lemmas}
\Crefname{lemma}{Lemma}{Lemmas}
\Crefname{conjecture}{Conjecture}{Conjectures}
\crefname{thm}{Theorem}{Theorems}
\Crefname{thm}{Theorem}{Theorems}
\Crefname{assumption}{Assumption}{Assumptions}
\crefformat{equation}{(#2#1#3)}